\newtheorem{theorem}{Theorem}
\newtheorem{lemma}[theorem]{Lemma}
\newtheorem{remark}{Remark}
\pgfplotsset{compat=1.17}
\begin{document}

\title{Fermat's Last Theorem for Special Case}
\author{Alireza Sharifi}
\date{November 2020}
\maketitle

\begin{abstract}

 In this paper we present an elementary proof for a special case of Fermat's last theorem for  specific category of a, b and c. In fact, we assume that $n$ is prime and $4\rvert (n+1),$ then for $a,b$ and $c$ that $ n\nmid abc$ the equation
 $ a^{2n} = b^{2n} + c^{2n} $ does not have any solution in natural numbers. 
    
\end{abstract}

\section*{ Introduction }

 Fermat's last theorem states that for the equation $a^n = b^n + c^n$ there is no  integers a,b and c that satisfies the equation for $n>2$.  There have been significant efforts to find an elementary proof regarding this theorem and it has been done for many special cases like the case n=4 by infinite descent method or the case n=5 proved by Legendre and Dirichlet and similarly for the other special cases, some proofs were come out one after another. These strives kept going until the famous proof appeared by Andrew Wiles for general case in which elliptical curves and modern mathematics were used.

Although the version , I presented in the abstract, is special case of one specific category of a,b and c,  the proof is completely elementary and understandable for all math lovers even for undergraduate level.

A very quick tour in the proof is that, we rewrite the equation by Pythagorean triple then we expand the expressions by Binomial expansion, and we keep simplifying and using lemmas to analyze the terms till we get to the contradiction.

\[\]
\[\]
\[\]
\[\]
\[\]
\[\]
\[\]
\[\]
We start the proof by stating some lemmas. First, the following lemma is famous \emph{``Pythagorean triple"} which we assume that the reader is already convinced as though can be found in any number theory literature. 

As a matter of simplicity we use the notation $(a,b)$ in place of $gcd(a,b).$

Moreover, we agree to use the notation $(a,b,c)=1$ to say that $(a,b)=1$ , $(a,c)=1$ and $(b,c)=1$ that is to say a,b and c are pairwise relatively prime with respect to each other.

\[\]

\begin{lemma}

For any $a, b$ and $c$ $\in \mathbb{N}$ which are pairwise relatively prime,i.e. $(a,b,c)=1$ and $a^2=b^2+c^2$ there are $l$ and $t$ in $\mathbb{N} $ we can rewrite a,b and c as follows:
\[ a=l^2 + t^2 ,\qquad c=l^2-t^2 ,\qquad  b=2lt\]

\end{lemma}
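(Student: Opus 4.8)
The plan is to reduce the equation $a^2 = b^2 + c^2$ to a factorization statement and then exploit unique factorization in $\mathbb{N}$. First I would settle the parity of the three numbers. Since $(a,b,c)=1$, the legs $b$ and $c$ cannot both be even. They also cannot both be odd, because then $b^2 + c^2 \equiv 2 \pmod 4$, whereas every square is congruent to $0$ or $1$ modulo $4$, so $a^2$ could never equal $b^2 + c^2$. Hence exactly one leg is even, and I would label it $b$ (this is precisely the leg the statement writes as $2lt$); consequently $a$ and $c$ are both odd.

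Next I would rewrite the equation as $b^2 = a^2 - c^2 = (a-c)(a+c)$. Because $a$ and $c$ are odd, both $a-c$ and $a+c$ are even, so I set $u = (a-c)/2$ and $v = (a+c)/2$, which gives $(b/2)^2 = uv$. Since $u+v = a$ and $v-u = c$, any common divisor of $u$ and $v$ must divide both $a$ and $c$; as $(a,c)=1$, this forces $(u,v)=1$.

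The crux is then the elementary fact that if two coprime natural numbers have a perfect-square product, each of them is itself a perfect square. Applying this to $uv = (b/2)^2$ yields $v = l^2$ and $u = t^2$ for some $l,t \in \mathbb{N}$. Substituting back recovers exactly $a = u+v = l^2 + t^2$, $\ c = v-u = l^2 - t^2$, and $b^2 = 4uv = (2lt)^2$, i.e. $b = 2lt$, as claimed.

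I expect the main obstacle to be making the \emph{``coprime factors of a square are squares''} step fully rigorous; this rests on the fundamental theorem of arithmetic, since comparing the prime factorizations of $u$, $v$ and $uv$ shows each prime occurs to an even power in $uv$ and, by coprimality, in only one of $u$ or $v$, so its exponent there is already even. A secondary point to handle with care is the parity normalization that lets me label the even leg $b$, together with checking $l > t$ so that $c = l^2 - t^2$ is a genuine positive natural number.
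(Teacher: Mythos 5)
Your proof is correct, but there is nothing in the paper to compare it against: the author explicitly declines to prove this lemma, presenting it as the famous Pythagorean-triple parametrization that ``can be found in any number theory literature.'' Your argument is the standard classical one: the parity analysis forces exactly one even leg; the factorization $b^2=(a-c)(a+c)$ with $u=(a-c)/2$ and $v=(a+c)/2$ produces coprime factors $u,v$ of the square $(b/2)^2$; and unique factorization then makes each of $u,v$ a perfect square, giving $a=l^2+t^2$, $c=l^2-t^2$, $b=2lt$. Two remarks. First, the step you identify as the crux --- \emph{coprime factors of a perfect square are themselves perfect squares} --- is precisely the paper's Lemma 2 in the case $n=2$, so within this paper you could simply invoke that lemma instead of re-deriving it from the fundamental theorem of arithmetic; this would also make the logical dependence between the lemmas explicit. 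Second, your parity normalization quietly repairs a defect in the statement itself: as literally written, the lemma is false when $c$ is the even leg (then the roles must be swapped, with $c=2lt$ and $b=l^2-t^2$), so the conclusion only holds after the relabeling you perform, and a careful reader of the main theorem should keep track of that swap. Your final observation that $l>t$ is needed for $c=l^2-t^2$ to lie in $\mathbb{N}$ is handled by $c>0$, which gives $v>u$ and hence $l^2>t^2$, as you say.
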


We proceed by the following lemma;

\begin{lemma}

For a,b $ \in  \mathbb{N}$ where $(a,b)=1$ and for each $c,n \in \mathbb{N}$ suppose $ab= c^n$ then $\sqrt[n]{a}$ and $\sqrt[n]{b}$ are both  natural numbers.

\end{lemma}

\begin{proof}

We use prime factorization of a and b;

So, there exist prime numbers $p_1 \dots p_m $ and $ q_1 \dots q_k $, such that 

\[
     a= p_1 ^ {\alpha_1} \dots p_m ^{\alpha_m} \qquad and \qquad  b= q_1^{\beta_1} \dots q_k ^{\beta_k}
\]

by which we can write \[ ab= p_1 ^ {\alpha_1} \dots p_m ^{\alpha_m} q_1^{\beta_1} \dots q_k ^{\beta_k}  \]

clearly since $p_i$ are prime, for each $i$, $p_i\mid c$. Now let's suppose $l$ be the greatest power of $p_i$ in prime factorization of c. Then there exist $t$ such that $c= p_i^lt.$

then  \[ ab= p_1 ^ {\alpha_1} \dots p_m ^{\alpha_m} q_1^{\beta_1} \dots q_k ^{\beta_k} = p_i^{ln} t^n  \] since the prime factorization is unique, then $\alpha_i = ln.$

Now it is easy to look at the value of $\sqrt[n]{p_i^{\alpha_i}}.$

\[ \sqrt[n]{p_i^{\alpha_i}}= \sqrt[n]{p_i^{ln}}= p_i^l \in \mathbb{N} \] therefore $\sqrt[n]{a}$  $\in \mathbb{N}.$

Completely analogous argument holds for $\sqrt[n]{b}$ which we rather leave for the reader.

\end{proof}

\begin{lemma}

For each a, b in  $\mathbb{N}$ where $(a,b)=1$ and for each prime $n$ that $n\nmid (a^n-b^n) $  we have :

\[ (a-b, \frac{a^n-b^n}{a-b})=1  \]

\end{lemma}

\begin{proof}

By an elementary identity for expansion we get

\[ \frac{a^n-b^n}{a-b} = a^{n-1}+a^{n-2}b+ \dots + ab^{n-2}+ b^{n-1} \] after adding and subtracting some terms we try to dig out the term $(a-b)$ among all except the last term.

\[\Longrightarrow \qquad =a^{n-1}-a^{n-2}b+ 2a^{n-2}b-2a^{n-3}b^2+ 3a^{n-3}b^2-  \dots - (n-3)a^2b^{n-3}+\]

\[+(n-2)a^2b^{n-3}- (n-2)ab^{n-2}+ (n-1)ab^{n-2}-(n-1)b^{n-1} + nb^{n-1}.\] 

and then by factoring the term $(a-b)$ we obtain
 
  \[= a^{n-2}(a-b) + 2a^{n-3}b(a-b) + 3a^{n-4}b^2 (a-b) + \dots\] 
  \[+ (n-2)ab^{n-3}(a-b) + (n-1)b^{n-2}(a-b)+ nb^{n-1}\]
  
  Now let's suppose \[(a-b, \frac{a^n-b^n}{a-b})=l \]
   then by definition we have 
   \[ l\mid a-b  \quad and \quad  l\mid \frac{a^n-b^n}{a-b} \] since l divides both hand side of the equation then consequently it has to divides $nb^{n-1}.$
   Same holds for the prime factors in prime factorization of $l$, indeed:
   
   Let $l= p_1 ^ {\alpha_1} \dots p_m ^{\alpha_m}$ then for each i;
   
   \[ p_i\mid a-b \quad  and \quad p_i\mid \frac{a^n-b^n}{a-b}
  \text{ hence, } p_i\mid nb^{n-1}.   \] Clearly, since $( p_i, b ) = 1$, we get
   
   \[  p_i\mid n \quad that\quad implies \quad p_i\in \{ 1,n \} \]
   
   Now according to the assumption of the lemma ,i.e, $ n\nmid a^n-b^n$ , $l$ has to be 1, which accomplishes the proof.

\end{proof}

\[\]

\begin{theorem}[ Fermat's Last theorem of special case ]

Suppose a,b and c $\in \mathbb{N}$ and are relatively prime ,i.e, $(a,b,c)=1$  and for each prime n that $4\mid(n+1)$ then if $n\nmid abc$ the equation \[ a^{2n}=b^{2n}+c^{2n}  \] does not have any solution in natural numbers.

\end{theorem}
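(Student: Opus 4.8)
The plan is to read the equation as a primitive Pythagorean relation and then extract a contradiction from a residue computation modulo $4$; the hypothesis $4\mid(n+1)$, i.e. $n\equiv 3\pmod 4$, is exactly what makes that residue computation fatal. Throughout, $n$ is an odd prime.

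First I would rewrite $a^{2n}=b^{2n}+c^{2n}$ as $(a^n)^2=(b^n)^2+(c^n)^2$. Since $(a,b,c)=1$ pairwise, the triple $(a^n,b^n,c^n)$ is primitive, so Lemma 1 applies (after possibly swapping the names of $b$ and $c$ so that $b^n$ is the even leg, which is harmless as everything is symmetric in $b,c$) and gives $l,t\in\mathbb{N}$ with
\[ a^n=l^2+t^2,\qquad b^n=2lt,\qquad c^n=l^2-t^2. \]
Adding and subtracting the outer two identities produces the relations I will build on:
\[ a^n+c^n=2l^2,\qquad a^n-c^n=2t^2. \]
From $b^n=2lt$ and $n\nmid b$ I get $n\nmid l$ and $n\nmid t$ (a prime dividing $l$ or $t$ would divide $b$); from the parametrization $a^n\equiv 1\pmod 4$, hence $a\equiv 1\pmod 4$; and $a,c$ are odd while $b$ is even.

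Next I would factor. Writing $T=\frac{a^n-c^n}{a-c}$ and $S=\frac{a^n+c^n}{a+c}$ (the latter makes sense since $n$ is odd), Lemma 3 applied to $(a,c)$ gives $(a-c,T)=1$, and its evident sum-analogue applied to $(a,c)$ gives $(a+c,S)=1$; the hypotheses $n\nmid a^n\mp c^n$ hold because $n\nmid 2t^2$ and $n\nmid 2l^2$. Now $T$ and $S$ are odd, each being a sum of $n$ odd terms with $n$ odd, so every factor of $2$ in $a^n-c^n=2t^2$ and $a^n+c^n=2l^2$ lies in the even cofactors $a-c$ and $a+c$. Cancelling one factor of $2$ yields coprime products equal to $t^2$ and $l^2$, and Lemma 2 with exponent $2$ then forces both $T$ and $S$ to be perfect squares.

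Finally comes the punch line, where $n\equiv 3\pmod 4$ is used. Reducing modulo $4$ with $a\equiv 1$ and $c\equiv\pm 1$,
\[ T\equiv\sum_{j=0}^{n-1}c^{\,j},\qquad S\equiv\sum_{j=0}^{n-1}(-c)^{\,j}\pmod 4. \]
Whichever of $c,-c$ is $\equiv 1\pmod 4$ makes the corresponding sum equal $n\equiv 3$, while the other sum equals $1$; in particular $TS\equiv 3\pmod 4$. But $T$ and $S$ are perfect squares, so $TS$ is a perfect square, and no perfect square is $\equiv 3\pmod 4$ — the desired contradiction. The step I expect to be most delicate is establishing rigorously that $T$ and $S$ are genuine perfect squares: this needs the coprimality from Lemma 3 together with careful tracking of the single factor of $2$, and it quietly relies on the sum-version of Lemma 3, which I would restate and re-derive exactly as in the proof of Lemma 3, since that lemma is phrased only for differences.
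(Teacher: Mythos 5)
Your proof is correct, but it is not the paper's proof: the two arguments share only their opening and closing moves. Both begin by applying Lemma 1 to $(a^n)^2=(b^n)^2+(c^n)^2$, and both end with a contradiction modulo $4$ driven by $n\equiv 3\pmod 4$; the middles are genuinely different. The paper never uses your identities $a^n-c^n=2t^2$ and $a^n+c^n=2l^2$. Instead it factors $a^{2n}-c^{2n}=b^{2n}$ over $a^2-c^2$, invokes Lemmas 3 and 2 (legitimate because $n\nmid b$) to conclude that $a^2-c^2$ is a perfect $2n$-th power $k^{2n}$, applies Lemma 1 a \emph{second} time to $a^2=c^2+(k^n)^2$ to produce new parameters $l,t$, and then expands $(l^2\pm t^2)^n=r^2\pm s^2$ binomially; after adding, subtracting, factoring out $l^2$ and $t^2$, and using that odd squares are $\equiv 1\pmod 8$, it reaches an equation whose left-hand side is divisible by $4$ --- this is where $4\mid(n+1)$ enters, through the extreme terms $(n+1)(l^2)^{n-1}$ and $(n+1)(t^2)^{n-1}$ --- while the right-hand side is $8(m+m')+2$. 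You instead read the two displayed identities off the parametrization directly and promote the cofactors $T=\frac{a^n-c^n}{a-c}$ and $S=\frac{a^n+c^n}{a+c}$ to odd perfect squares via Lemma 3 together with Lemma 2 at exponent $2$; then one of $T,S$ is $\equiv n\equiv 3\pmod 4$, which is impossible for an odd square. The price of your route is the sum-analogue of Lemma 3, which the paper never needs; it does hold, and the gap you flagged is easily filled: reducing $S=\sum_{j=0}^{n-1}(-1)^j a^{n-1-j}c^j$ modulo $a+c$ gives $S\equiv nc^{n-1}$, and the rest of the paper's Lemma 3 argument goes through with $-c$ in place of $b$, the hypothesis $n\nmid a^n+c^n=2l^2$ following from $n\nmid l$. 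The gain is substantial: you eliminate the second application of Lemma 1, the $2n$-th-root extraction, and all binomial-coefficient bookkeeping, and your argument localizes the hypotheses more transparently ($n\nmid abc$ acts only as $n\nmid b$, and $4\mid(n+1)$ only in the final residue count). One stylistic point: the contradiction is cleanest stated not through the product $TS$ but directly --- whichever of $T,S$ is congruent to $3\pmod 4$ is an odd perfect square, yet odd squares are $\equiv 1\pmod 4$.
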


\begin{proof}

By lemma 1, there exist r and s $\in \mathbb{N}$ 

\[ a^n = r^2 + s^2 ,\quad and \quad c^n= r^2-s^2 \quad and \quad b^n=2rs   \qquad (1) \]

On the other hand, \[ a^{2n} -c^{2n}= b^{2n} \Longrightarrow (a^2)^n -(c^2)^n =b^{2n}  \] then by expanding above we get:

\[ (a^2-b^2) \biggl((a^2)^{n-1} +(a^2)^{n-2} (c^2) + (a^2)^{n-3} (c^2)^2 + \dots \]
\[+ (a^2)^2 (c^2)^{n-3} + (a^2)(c^2)^{n-2} + (c^2)^{n-1} \biggl) = b^{2n} \]
  
  By analyzing two above parenthesis and invoking lemma 2 and 3 we obtain :
  
  \[  \sqrt[2n]{a^2-c^2}= k   \] for some $k \in \mathbb{N}.$
  
  which implies \[ a^2=c^2 + (k^n)^2 \] by which and lemma 1 we again can find $l,t \in \mathbb{N} $ such that
  
  \[ a=l^2+ t^2 \quad, c=l^2-t^2, \quad k^n= 2lt \]
  
  Here is worth mentioning that $l$ and $t$  can not be at same time odd or even, because otherwise $(a,c)\neq 1.$
  
  it is not hard to derive from above equations and the equations  (1) , the followings:

  \[ a^n= (l^2+ t^2)^n = r^2 + s^2 \qquad  and \qquad c^n= (l^2-t^2 )^n= r^2 - s^2\]
  
  we now proceed the rest of the proof by expanding above equations:
  
  \[ (l^2)^n + \binom{n}{1}(l^2)^{n-1}(t^2) + \binom{n}{2}(l^2)^{n-2} (t^2)^2 + \dots +\binom{n}{n-2}(l^2)^2 (t^2)^{n-2} + \]
  \[ +\binom{n}{n-1}(l^2)(t^2)^{n-1} + (t^2)^n = r^2+s^2 \qquad (2)\]
  
  Similarly,
  
 \[ (l^2)^n - \binom{n}{1}(l^2)^{n-1}(t^2) + \binom{n}{2}(l^2)^{n-2} (t^2)^2 - \dots -\binom{n}{n-2}(l^2)^2 (t^2)^{n-2} + \]
  \[ \binom{n}{n-1}(l^2)(t^2)^{n-1} - (t^2)^n = r^2-s^2 \qquad (3)\]
  
  So now we add the equations (2) and (3) and recall an elementary fact that square of any odd number can be written as $8q+1$ for some $q\in \mathbb{N}$ we obtain:
  
  \[ (l^2)^{n-1} + \binom{n}{2}(l^2)^{n-3}(t^2)^2+ \dots + \binom{n}{n-3}(l^2)^2(t^2)^{n-3} \]
  \[ + \binom{n}{n-1}(t^2)^{n-1} = \frac{r^2}{l^2} = 8m+1 \qquad (4) \] for some $m\in \mathbb{N}.$
  
  It is noticeable that in above we factored out the term $l^2.$ Please note that whether $l$ or $t$ are odd or even or the other way round, the left hand side of the (4) is odd.
  
  We follow by the subtracting equations (2) and (3):
  
  \[ \binom{n}{1} (l^2)^{n-1} + \binom{n}{3}(l^2)^{n-3}(t^2)^2 + \dots + \binom{n}{n-2}(l^2)^2(t^2)^{n-3} +  \]
  \[ +(t^2)^{n-1} = \frac{s^2}{t^2}= 8m'  +1  \qquad (5) \] for some $m'\in \mathbb{N}.$
  
  Again we point that in above calculations the term $t^2$ has been factored. Please note that whether $l$ or $t$ is odd or even or the other way round, the left hand side of the (5) is odd.
  \[\]
  Then eventually we sum up the equations (4) and (5) to get to the following equation
  
  \[ (l^2)^{n-1} (n+1) + \left(\binom{n}{2} + \binom{n}{3}\right) (l^2)^{n-3}(t^2)^2 + \dots +  \]
  \[ \left( \binom{n}{n-3} + \binom{n}{n-2}\right) (l^2)^{2}(t^2)^{n-3} + (t^2)^{n-1} (n+1)  \] 
  \[ = 8(m+m')+2  \qquad (6) \]

  By assumption the first term is dividable by 4, moreover, whether $t$ or $l$ is even or odd or the other way round, there will be a factor 4 among all terms on the left hand side of the above equation, so then the LHS is dividable by 4. However, on the right hand side there is a constant 2 which causes the contradiction.

\end{proof}

\begin{remark}

The hypothesis $(a,b,c)=1$ is just redundant, because if it is not we cancel the common factor and then follow the proof as above.
\end{remark}

\cite{MR1509441}
\cite{Barbara2016-ls} 
\cite{MR990017} 
\cite{riehl2005kummer}
\cite{MR532370}
\printbibliography 

\end{document}